\documentclass{amsart}
\usepackage{amsmath,amsfonts,amsthm,amssymb,indentfirst,epic,xurl,graphics,needspace}

\setlength{\textwidth}{6.5in}
\setlength{\textheight}{10.1in}
\setlength{\evensidemargin}{0in}
\setlength{\oddsidemargin}{0in}
\setlength{\topmargin}{-.8in}
\sloppy

\setlength{\mathsurround}{.167em}

\newtheorem{theorem}{Theorem}
\newtheorem{lemma}[theorem]{Lemma}

\newtheorem{proposition}[theorem]{Proposition}
\newtheorem{definition}[theorem]{Definition}
\newtheorem{question}[theorem]{Question}

\numberwithin{equation}{section}
\numberwithin{theorem}{section}

\raggedbottom

\newcommand{\Z}{\mathbb{Z}}
\renewcommand{\r}{\mathrm}

\begin{document}

\begin{center}
\texttt{Comments, corrections,
and related references welcomed, as always!}\\[.5em]
{\TeX}ed \today
\\[.5em]
\vspace{2em}
\end{center}

\title%
{Strongly clean ring elements that are one-sided inverses}
\thanks{%
Readable at \url{http://math.berkeley.edu/~gbergman/papers/}.
After publication, any updates, errata, related references,
etc., found will be noted at
\url{http://math.berkeley.edu/~gbergman/papers/abstracts}\,.\\
}

\subjclass[2020]{Primary: 16U40, 16U60.
Secondary: 16S50, 16U90.
}
\keywords{idempotents and units in rings, strongly clean ring elements,
uniquely strongly clean ring elements.}

\author{George M.\ Bergman}
\address{Department of Mathematics\\
University of California\\
Berkeley, CA 94720-3840, USA}
\email{gbergman@math.berkeley.edu}

\begin{abstract}
A longstanding open question is whether every strongly clean ring
is Dedekind-finite (definitions recalled below).
We give an example of a ring with two strongly clean elements
that are one-sided, but not two-sided inverses of one
another, suggesting that the answer to that question may be negative.

We discuss possible ways of strengthening this result
to give a full negative answer.
We end with some brief observations on related topics, in particular,
uniquely strongly clean rings.
\end{abstract}
\maketitle

\section{The example}\label{S.Example}

Here is some standard usage that we will follow:

\begin{definition}\label{D.gen}
In this note, all rings are understood to be associative and unital.

An element $x$ of a ring $R$ is
called {\em clean} if it can be written as the sum $e+u$ of
an idempotent $e$ and a unit $u$ of $R,$ and {\em strongly clean}
if it can be so written with $e$ and $u$ commuting with each other,
equivalently, commuting with $x.$
A ring $R$ is called clean, respectively strongly
clean, if all of its elements have the property named
{\rm(}\cite{WKN}, \cite{Y+L}{\rm)}.

A ring $R$ is called {\em Dedekind-finite} if every
one-sided-invertible element of $R$ is invertible.
\end{definition}

The question of whether every strongly clean ring is
Dedekind-finite was posed in
1999 by W.\,K.~Nicholson \cite[Question~2, p.\,3590]{WKN}.
For $k$ a field, we construct below a $\!k\!$-algebra $R$ having
a pair of elements that are both strongly clean, and
which are one-sided but not two-sided inverses of one another.
Thus, if this ring can somehow be embedded in a ring
where {\em all} elements are strongly clean, the
resulting example would answer Nicholson's question in the negative.

Here is some notation that will be used in describing our example:

\begin{definition}\label{D.kVR}
Throughout this section $k$ will be a field, and $V$ will denote
the underlying k-vector-space of the subalgebra of the rational
function field $k(t)$ comprising elements having
denominator not divisible by $t;$
i.e., of the localization $k[t]_{(t)}$
of the polynomial ring $k[t]$ at the prime ideal~$(t).$
We denote by $R$ the $\!k\!$-algebra $\r{End}_k(V)$ of all
$\!k\!$-linear endomaps of the vector space $V,$ written on
the left of their arguments and composed accordingly.
\end{definition}

(We could equally well take for $R$ the
subalgebra of $\r{End}_k(V)$ generated by the five maps
that will be denoted
$y,$ $x,$ $(y-1)^{-1},$ $e,$ and $(x-e)^{-1}$ below.
But it will be convenient to have a ring that we can refer to when
defining these elements.)

Let $y$ be the endomorphism of $V$ given by multiplication by $t$
in $k[t]_{(t)}$:
\begin{equation}\begin{minipage}[c]{35pc}\label{d.y=}
$y(f(t))\ =\ t\,f(t)$ \ for $f(t)\in V,$
\end{minipage}\end{equation}
and $x$ the endomorphism of $V$ given by multiplying by $t^{-1}$ and,
in power series notation, dropping the $t^{-1}$ term if any.
In rational function notation, this is
\begin{equation}\begin{minipage}[c]{35pc}\label{d.x=}
$x(f(t))\ =\ t^{-1} (f(t)-f(0))$ \ for $f(t)\in V.$
\end{minipage}\end{equation}

It is clear that
\begin{equation}\begin{minipage}[c]{35pc}\label{d.xy}
$x\,y\ =\ 1.$
\end{minipage}\end{equation}

On the other hand, $y\,x$ has the effect
of dropping the $t^0$ term from the power series representing
an element of $V,$ so
\begin{equation}\begin{minipage}[c]{35pc}\label{d.yx}
$y\,x\ \neq\ 1.$
\end{minipage}\end{equation}

We will now prove $x$ and $y$ strongly clean in $R.$

Note that $y-1$ is invertible in $R$:~ it takes $f(t)$ to $(t-1)f(t),$
and $t-1$ is invertible in $k[t]_{(t)}.$
Thus $y$ is the sum of the invertible element $y-1$ and
the idempotent element $1,$ which commute, showing that
\begin{equation}\begin{minipage}[c]{35pc}\label{d.y_cln}
$y$ \,is strongly clean in\, $R.$
\end{minipage}\end{equation}

In proving $x$ strongly clean, we can't take for $e$
one of the idempotents $1$ or $0$:
$x-1$ is not invertible since $x$ fixes $(1-t)^{-1}$ (easily seen
by looking at power series expansions);
and $x-0$ is not invertible since $x$ annihilates $1.$
So to get a decomposition $x=e+u,$
we will have to use a nontrivial idempotent~$e.$

To construct $e,$ note that
\begin{equation}\begin{minipage}[c]{35pc}\label{d.oplus}
$V \,= \ V_0\oplus V_1,$
\end{minipage}\end{equation}
where
\begin{equation}\begin{minipage}[c]{35pc}\label{d.V0}
$V_0$ \ is the subspace \ $k[t]\,\subseteq\,V,$
\end{minipage}\end{equation}
and
\begin{equation}\begin{minipage}[c]{35pc}\label{d.V1}
$V_1\subseteq V$ is the space of rational functions
whose numerators have lower degrees than their denominators.
\end{minipage}\end{equation}
Indeed, given an element $p(t)/q(t)\in V,$ where $p(t),\,q(t)\in k[t],$
its decomposition as in~\eqref{d.oplus} arises from the
decomposition of $p(t)$ as a multiple of $q(t)$ in $k[t],$
plus a remainder of degree less than that of $q(t).$
(Intuitively, $V_1$ is the set of $f(t)\in V$
such that $f(\infty) = 0.)$
We now
\begin{equation}\begin{minipage}[c]{35pc}\label{d.e}
Let $e\in R$ be the projection of $V$ onto $V_0$
under the decomposition~\eqref{d.oplus}.
\end{minipage}\end{equation}

Clearly, $x,$ defined in~\eqref{d.x=}, carries $V_0$ into itself.
A little thought shows that it also carries $V_1$ into itself:
Although when~\eqref{d.x=} is applied to an element $f\in V_1,$ the
expression $f(t) - f(0)$ has in general lost the property that $f$
had, of having value $0$ at $\infty,$ it still has a finite value there,
namely $-f(0);$ so multiplying by $t^{-1}$ again brings the value
at $\infty$ to $0.$
(The reader can translate all this into reasoning
about degrees and values at $0$ of numerators and denominators.)

Hence,
\begin{equation}\begin{minipage}[c]{35pc}\label{d.xe}
$x$ commutes with $e.$
\end{minipage}\end{equation}

Now since $V_1$ contains no nonzero constants, we see
from~\eqref{d.x=} that $x$ has trivial kernel on $V_1.$
It is also surjective on $V_1$: Given $f(t)\in V_1,$ if
we let $c\in k$ be
the value of $t f(t)$ at $\infty,$ we find that $t f(t) - c\in V_1$
and $x$ carries that element to $f(t).$
(This is like the construction of the
preceding paragraph, with the roles of $0$ and $\infty$ interchanged.)
Thus $x$ acts invertibly on $V_1.$
Since $e$ annihilates $V_1,$ this tells us that
\begin{equation}\begin{minipage}[c]{35pc}\label{d.x-e_V1}
$x-e$ acts invertibly on $V_1.$
\end{minipage}\end{equation}

How does $x-e$ behave on $V_0$?
For every $n\geq 0,$ we see from~\eqref{d.x=} that
the action of $x$ on polynomials
in $t$ of degree $\leq n$ is nilpotent, while by~\eqref{d.e},
$e$ acts as the identity endomorphism on $V_0;$ so
\begin{equation}\begin{minipage}[c]{35pc}\label{d.x-e_V0}
$x-e$ acts invertibly on $V_0.$
\end{minipage}\end{equation}

Together, \eqref{d.x-e_V1} and~\eqref{d.x-e_V0}
say that $x-e$ acts invertibly on $V;$ so $x$ is the sum
of the idempotent $e$ and the invertible element $x-e,$
which commute by~\eqref{d.xe}.
So
\begin{equation}\begin{minipage}[c]{35pc}\label{d.str_cl}
$x$ is strongly clean.
\end{minipage}\end{equation}

We have thus proved

\begin{proposition}\label{P.yx_neq_1}
In the k-algebra $R$ of all $\!k\!$-vector-space endomorphisms of
the vector space $V$ defined in Definition~\ref{D.kVR},
the elements $x$ and $y$ described
by~\eqref{d.y=} and~\eqref{d.x=} are strongly clean,
and satisfy \mbox{ $x\,y = 1$} but $y\,x \neq~1.$\qed
\end{proposition}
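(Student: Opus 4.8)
The plan is to verify the four asserted properties of $x$ and $y$ in turn, since the proposition merely collects them. The relations $x\,y=1$ and $y\,x\neq 1$ follow at once from the defining formulas~\eqref{d.y=} and~\eqref{d.x=}: the composite $x\,y$ multiplies $f(t)$ by $t$ and then divides by $t$ after discarding the constant term, but multiplication by $t$ introduces no constant term, so nothing is discarded and $f$ is recovered; the reverse composite $y\,x$ instead has the net effect of deleting the constant term of $f,$ so it sends $1$ to $0$ and is not the identity. For the strong cleanness of $y,$ I would use that $t-1$ is a unit of the local ring $k[t]_{(t)},$ so that $y-1$ (multiplication by $t-1$) is invertible in $R$; since the idempotent $1$ commutes with everything, this exhibits $y=1+(y-1)$ as a strongly clean decomposition.

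The substance of the argument, and the step I expect to be the main obstacle, is showing $x$ strongly clean. The two trivial idempotents are useless here: $x-1$ fixes $(1-t)^{-1}$ and $x$ annihilates $1,$ so neither $x-0$ nor $x-1$ is invertible, and a genuinely nontrivial idempotent must be produced. The governing idea is to split $V$ into two $\!x\!$-invariant summands on which $x$ behaves in opposite ways: one on which $x$ is locally nilpotent, and a complementary one on which $x$ is invertible. Taking $V_0=k[t]$ and letting $V_1$ be the rational functions whose numerator has smaller degree than its denominator accomplishes this, since $x$ strictly lowers the degree of a polynomial (hence is locally nilpotent on $V_0$) while, by the symmetric argument with the roles of $0$ and $\infty$ interchanged, it acts bijectively on $V_1.$ Letting $e$ be the projection onto $V_0,$ the invariance of each summand makes $x$ commute with $e,$ and then $x-e$ is invertible summand-by-summand: on $V_0$ it is $1$ minus a locally nilpotent operator, and on $V_1$ it is simply $x.$ Thus $x=e+(x-e)$ is the required commuting idempotent-plus-unit decomposition.

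The point demanding the most honest checking is that $V_1$ really is carried into itself by $x$ and that $x$ is onto there, for this is not visually obvious: applying~\eqref{d.x=} to an $f\in V_1$ destroys the vanishing at $\infty,$ and one must observe that the subsequent multiplication by $t^{-1}$ restores it, and conversely that a suitable constant can be subtracted to hit any prescribed element of $V_1.$ This bookkeeping about values at $\infty$ (equivalently, about the degrees of numerators and denominators) is the only place where a non-formal computation enters; everything else is just assembly of the pieces above.
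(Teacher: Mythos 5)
Your proposal is correct and follows essentially the same route as the paper: the same direct verification of $x\,y=1$ and $y\,x\neq 1,$ the same decomposition $y=1+(y-1)$ using the invertibility of $t-1$ in $k[t]_{(t)},$ and the same splitting $V=k[t]\oplus V_1$ with $e$ the projection onto $k[t],$ so that $x-e$ is invertible summand-by-summand (locally nilpotent minus the identity on $V_0,$ and $x$ itself on $V_1$). You also correctly isolate the one point needing genuine verification, namely that $x$ maps $V_1$ bijectively to itself, which the paper checks by the same bookkeeping on values at $\infty.$
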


\section{Thoughts on possible stronger results}\label{S.thoughts}

I wonder whether one can push the approach of the above
construction further, and get an
example actually answering Nicholson's question:

\begin{question}\label{Q.more}
As in Definition~\ref{D.kVR},
let $k$ be a field and $V$ the underlying $\!k\!$-vector-space
of $k[t]_{(t)};$ and as in~\eqref{d.oplus}-\eqref{d.V1},
let $V=V_0\oplus V_1,$ where $V_0=k[t]$ and $V_1$
is the space of elements of $k[t]_{(t)}$
whose numerators have lower degrees than their denominators.
Further,
\begin{equation}\begin{minipage}[c]{35pc}\label{d.R=}
Let $R$ be the algebra of $\!k\!$-linear
endomorphisms $x$ of $V$ such that there exist
subspaces $V'_0\subseteq V_0$ and $V'_1\subseteq V_1,$
each of finite $\!k\!$-codimension in the
indicated space, and elements $r_0,\,r_1\in k(t),$ such that
$x$ carries $V'_0$ into $V_0$ by multiplication by $r_0,$
and carries $V'_1$ into $V_1$ by multiplication by $r_1.$
\end{minipage}\end{equation}
Is $R$ strongly clean?
\end{question}

Note that if an element $x$ satisfies the condition of~\eqref{d.R=},
the elements $r_0$ and $r_1$ referred to will be uniquely
determined by $x,$ but the subspaces $V'_0$ and $V'_1$
will not, though there will, of course, be maximal
subspaces on which $x$ acts as multiplication
by the indicated elements.
Given $V'_0$ and $V'_1,$
the element $x$ is uniquely determined by $r_0,$ $r_1,$ and
the behavior of $x$ on vector-space complements of $V'_0$ in $V_0$
and of $V'_1$ in $V_1.$
On those complements, $x$ is allowed to act as an arbitrary
$\!k\!$-linear map into $V;$ it is not required to
carry elements of $V_0$ or $V_1$ into that same subspace,
nor to respect multiplication by members of $k[t]_{(t)}.$
(And indeed, the $y$ of~\eqref{d.y=}
carries $(1-t)^{-1}\in V_1$ to $t\,(1-t)^{-1},$ whose
numerator is not of lower degree than its denominator,
as would be required for it to belong to $V_1.$
Its $V_0\oplus V_1$ decomposition is $-1+(1-t)^{-1}.)$

It is easy to see that the set of maps $x$
satisfying the condition of~\eqref{d.R=}
is closed under addition, and not hard to check
that it is closed under composition.

In studying Question~\ref{Q.more}, it might help
to look at a more general construction of which~\eqref{d.R=} is a
particular case.
If $V$ is an infinite-dimensional vector space over
a field $k,$ suppose we define a ``partial endomorphism''
of $V$ to mean a $\!k\!$-linear map from a subspace of finite
codimension in $V$ into $V,$ and a ``quasi-endomorphism''
of $V$ to mean an equivalence class of partial endomorphisms
under the equivalence relation of agreeing on a subspace
of finite codimension.
({\em Have these concepts been studied?
If so, what are they called?})
The quasi-endomorphisms of $V$ form a $\!k\!$-algebra in
a natural way.
Now suppose $V_0$ and $V_1$ are infinite-dimensional vector
spaces over $k,$ and $D_0,$ $D_1$ are division subalgebras of the
algebras of quasi-endomorphisms of $V_0$ and $V_1$ respectively.
Let $R$ be the ring of all endomorphisms
of $V_0\times V_1$ which, when restricted to each $V_i,$
carry a subspace of finite codimension back into $V_i,$
and such that the induced quasi-endomorphism is a member of~$D_i.$
(Thus, $R$ has a natural homomorphism to $D_0\times D_1,$
whose kernel is the ideal of endomorphisms of $V$ of finite rank.)

Sadly, my best guess is that the answers to
Question~\ref{Q.more} and its generalization to algebras
of the sort described in the preceding paragraph are,
at least as presently formulated, negative.
This is based on the following observation.
We have used two spaces $V_0$ and $V_1$ in Question~\ref{Q.more}
only to set up a framework in which to construct pairs of elements
which are one-sided but not two-sided inverses
to one another; but the strong cleanness
conclusion, if it holds, should apply to the corresponding
construction on a single vector space.
Now let $V$ be the underlying $\!k\!$-vector space
of the polynomial ring $k[t],$ which has, as above, a natural
algebra of quasi-endomorphisms isomorphic to $k(t),$
and let $x\in\r{End}(V)$ be the
operation of multiplication by $t.$
Then given any $y\in\r{End}(V)$ centralizing $x,$
we can regard $y(1)$ as a polynomial $p(t),$ and
it is easy to verify that $y$ must act on all of $V$ as
multiplication by $p(t);$ so the centralizer of $x$ is
isomorphic as a $\!k\!$-algebra to $k[t],$ and $x$ is
not clean in that $\!k\!$-algebra.
(Restricting our search for endomorphisms centralizing $x$
to the subalgebra of elements of $\r{End}(V)$ which induce
quasi-endomorphisms belonging to $k(t)$
obviously does not improve things.)

On the other hand, if we let $V$ be the underlying
vector space of $k[t]_{(t)}$ or of $k[[t]],$ then
the endomorphism corresponding to
multiplication by $t$ does become strongly
clean in that over-ring; indeed, again calling that
operation $x,$ we see that $x-1$ is invertible in
those two algebras.
So perhaps some variant of the idea
of Question~\ref{Q.more} will give a strongly
clean ring containing elements $x$ and $y$
as in Proposition~\ref{P.yx_neq_1}.

Perhaps a much stronger sort of result is true than what is
asked for above:

\begin{question}\label{Q.still_more}
For every {\rm(}associative unital{\rm)} algebra $R$ over
a field $k,$ and every $x\in R,$ does there exist
a $\!k\!$-algebra $R'\supseteq R$ in which $x$ is strongly clean?

{\rm(}If so, then by a transfinite induction one can embed
every $\!k\!$-algebra $R$ in a strongly clean $\!k\!$-algebra.{\rm)}
\end{question}

To prove such a result, one might start
with an arbitrary faithful action of $R$ on a vector space $V,$
and try to obtain induced faithful actions on new
vector spaces that would make the action of $x$ strongly clean
in the endomorphism rings of these spaces.
I have looked at the case where $W=V^*,$ the vector-space dual of $V$
(with the induced action of $R$ on $W$ written on the right if we have
been writing the action on $V$ on the left, to handle the
contravariance), but not gotten anywhere.
Variants which I haven't looked at would be to let $W$ be
an infinite direct sum or direct product of copies of $V,$ or
an ultrapower of $V.$

A different approach to Question~\ref{Q.still_more}, which I have also
attempted without success, but which others might try is, assuming a
$\!k\!$-algebra $R$ and an element $x\in R$ given, to
see what happens when one adjoins to $R$ two elements
$u$ and $u^{-1},$ universal for
satisfying the five relations saying that they
are inverses to each other, that they commute with $x,$ and
that $x-u$ is idempotent.
A slight variant would be to first adjoin a universal
idempotent $e$ commuting with $x,$ study what one can say
about the structure of the resulting ring, {\em then}
adjoin a universal inverse to $x-e,$ and see whether one
can prove that $R$ embeds in the resulting ring.
If we had a nice normal form for elements of $R,$ then
the Diamond Lemma~\cite{<>} might be used to get a
normal form for the extended ring, allowing us to see whether
the natural map of $R$ into it is indeed an embedding.

The Fitting Decomposition Theorem \cite[p.\,299 (19.16)]{TYL_1st}
implies that the endomorphism ring of any module of
{\em finite length} over any ring is strongly clean.
Such endomorphism rings are Dedekind-finite,
but conceivably, that theorem might be useful
in some multi-step construction of a strongly
clean non-Dedekind-finite ring.

For some results relating strong cleanness to
other ring-theoretic conditions, including
Dedekind finiteness, see~\cite{C+D+N}.

\section{Some related topics}\label{S.also}

An element $x$ of a ring $R$ is called {\em uniquely clean}
if there is a {\em unique} decomposition $x=e+u$ with $e$
idempotent and $u$ invertible, and {\em uniquely
strongly clean} if there is a unique such decomposition
in which $e$ and $u$ commute with each other.
Rings in which all elements have these properties
are called {\em uniquely clean rings,}
respectively, {\em uniquely strongly clean rings}.
There has been considerable research on these classes
of rings, e.g.,~\cite{C+W+Z}, \cite{WKN+YZ},~\cite{Y+L}.

An open question,
\cite[Question~19]{C+W+Z}, \cite[Question~5.1]{Y+L},
is whether every homomorphic image of a uniquely strongly
clean ring is again uniquely strongly clean.
This is known to be true for uniquely strongly clean rings
in which all idempotents are central (which are precisely
the uniquely clean rings.
For that equivalence, see~\cite[Lemma~4]{WKN+YZ},
and for the result on homomorphic images,~\cite[Theorem~22]{WKN+YZ}).
As a possible way to look for a counterexample if idempotents
are not required to be central,
one might first note that though the matrix ring
$M_2(\Z/2\Z)$ is strongly clean, it is not uniquely strongly
clean, since $x=e_{12}+e_{21}+e_{22}$ has the two strongly
clean decompositions $0+x$ and $1+(x-1),$ and then
try to find a uniquely strongly clean ring
$R,$ and a surjective homomorphism $R\to M_2(\Z/2\Z)$ under which
distinct elements $p$ and $q$ both map to $x,$
with the (unique) strongly clean decompositions of $p$ and $q$
mapping to the distinct decompositions of that common image.

Another topic:
Observe that a nonzero uniquely strongly clean ring $R$ can never be
an algebra over a field $k$ with more than two elements, since
if $u$ is an element of $k$ other than $0$ and $1,$
it has the two strongly clean decompositions
$0+u$ and $1+(u-1).$
Even if a uniquely strongly clean ring $R$ is not assumed an
algebra over a field, image rings of characteristic~$2$ occur
throughout the study of such rings.
For example, the quotient of $R$ by its Jacobson
radical is a Boolean ring \cite[Theorem~20,\,(1)$\implies$(3)]{WKN+YZ}.

One might get interesting results not restricted to rings
with homomorphic images of characteristic $2$
if,
in an algebra $R$ over a field $k,$ one defined
a ``metaidempotent'' ({\em is there an existing term?})
to mean a $\!k\!$-linear combination $r$ of a family of
mutually commuting idempotents; equivalently,
an element $r$ that satisfies a polynomial relation
of the form $(r-a_1)\dots (r-a_n)=0,$ where the
$a_i$ are {\em distinct} members of $k;$ and one might
study $\!k\!$-algebras $R$ which satisfy the generalization
of unique strong cleanness saying that every element has a
unique decomposition as the sum of a metaidempotent
and a member of the Jacobson radical $J(R)$
(cf.\ last sentence of preceding paragraph).
There might, again, be versions of this condition for rings which
are not themselves algebras over fields, perhaps with $R/J(R)$
an algebra over a field.

We end by noting a result of a similar flavor
to the existence result we tried unsuccessfully
to prove in section~\ref{S.thoughts},
but which is, in contrast, trivial to prove.

\begin{lemma}\label{L.unit+unit}
Every ring $R$ can be embedded in a ring $R'$ such that
every element of $R'$ is a sum of two commuting units.
Namely, the formal Laurent series ring $R'=R((t))=R[[t]][t^{-1}]$ has
this property.
\end{lemma}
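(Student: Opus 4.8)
The plan is to take the ring $R'=R((t))$ at face value and verify the stated property directly; the embedding of $R$ into $R'$ as the constant Laurent series is a ring homomorphism and is visibly injective, so the entire content is the claim that every $f\in R((t))$ is a sum of two commuting units.

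First I would isolate the single fact about $R((t))$ that the argument needs: if a nonzero Laurent series $g=\sum_{n\ge M} b_n t^n$ (with $b_M\ne 0$) has its lowest-degree coefficient $b_M$ equal to a unit of $R$, then $g$ is a unit of $R((t))$. To see this, factor $g=t^M h$ with $h=b_M+b_{M+1}t+\cdots\in R[[t]]$; since $t$ is central and invertible in $R((t))$, it suffices to invert $h$, and writing $h=b_M\,(1+w)$ with $w\in tR[[t]]$ exhibits $h$ as a product of the unit $b_M$ and the element $1+w$, the latter being invertible in $R[[t]]$ via the $t$-adically convergent series $\sum_{k\ge 0}(-w)^k$. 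Only this ``if'' direction is used, and it holds with no commutativity hypothesis on $R$.

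With that in hand the decomposition is immediate. Given a nonzero $f\in R((t))$, let $N$ be its order, i.e.\ the least integer with nonzero coefficient, and set $v=t^{N-1}$ and $u=f-v$. Then $v$ is a unit of $R((t))$ (with inverse $t^{\,1-N}$) and is central because $t$ is, while $u=-t^{N-1}+\cdots$ has order $N-1$ with lowest-degree coefficient $-1$, a unit of $R$, so $u$ is a unit by the previous paragraph. Since $v$ is central it commutes with $u$, and $f=u+v$ is the required expression; the case $f=0$ is covered by $0=1+(-1)$.

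I do not expect any genuine obstacle here, which is exactly why the lemma is ``trivial to prove.'' The only point meriting a moment's care, since $R$ is allowed to be noncommutative, is the unit criterion of the second step; but as noted, it is invoked only in the direction ``unit leading coefficient implies invertible,'' which follows purely from the centrality and invertibility of $t$ together with the standard inversion of a power series of the form $1+w$ with $w\in tR[[t]]$.
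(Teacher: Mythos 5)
Your proposal is correct and is essentially the paper's own argument: in both cases one subtracts off a central power of $t$ lying strictly below the order of the given series, so that both summands have invertible lowest-order coefficients and are therefore units, and commutativity comes from the centrality of $t$. The only differences are cosmetic --- you normalize so the leading coefficient of one summand is $-1$ rather than $1$, treat $f=0$ as a separate (easy) case, and spell out the standard fact that a Laurent series with unit lowest coefficient is invertible, which the paper simply cites as known.
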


\begin{proof}
Given $x\in R',$ choose an integer $N$ such that $x\in t^{N+1}R[[t]].$
Let $u=t^N\!+x$ and $u'= -t^N.$
Since $u$ and $u'$ both have invertible leading terms, they are
both units.
They commute, since $t$ is central; and clearly~$u+u'=x.$
\end{proof}

\section{Acknowledgements}\label{S.ackn}
I am indebted to Tom Dorsey and T.\,Y.\,Lam for pointing out
the question from \cite{WKN} that led to Proposition~\ref{P.yx_neq_1},
and to Pace Nielsen and T.\,Y.\,Lam for helpful
comments on earlier versions of this note.

\end{document}